\documentclass{article}
\usepackage[T1]{fontenc}
\usepackage{courier}
\usepackage{amsmath}
\usepackage{amsfonts}
\usepackage{amsthm,mathtools}
\usepackage{accents}
\usepackage{tikz-cd}
\usepackage[hyperfootnotes=false, colorlinks, linkcolor={blue}, citecolor={magenta}, filecolor={blue}, urlcolor={blue}]{hyperref}
\newcommand{\ds}[1]{\displaystyle{#1}}
\newcommand{\mr}[1]{\mathrm{#1}}
\newcommand{\mb}[1]{\mathbb{#1}}
\newcommand{\mc}[1]{\mathcal{#1}}

\newtheorem{lemma}{Lemma}
\newtheorem{thm}{Theorem}
\newtheorem*{theorem}{Theorem}

\newtheorem{prop}{Proposition}

\newcommand{\HH}{\mathfrak h}

\renewcommand{\H}{\mathbb H}

\newcommand{\Q}{{\mathbb Q}}
\newcommand{\Z}{{\mathbb Z}}
\newcommand{\N}{{\mathbb N}}
\newcommand{\R}{{\mathbb R}}

\newcommand{\C}{{\mathbb C}}

\newcommand{\GL}{{\rm GL}}

\newcommand{\GSp}{{\rm GSp}}
\newcommand{\Sp}{{\rm Sp}}

\newcommand{\cH}{{\cal H}}

\renewcommand{\ss}{{\rm ss}}

\newcommand{\SSp}{{\rm Sp}}

\newcommand{\mat}[4]{{\setlength{\arraycolsep}{0.5mm}\left[
\begin{smallmatrix}#1&#2\\#3&#4\end{smallmatrix}\right]}}
\newcommand{\forget}[1]{}

\def\qdots{\mathinner{\mkern1mu\raise0pt\vbox{\kern7pt\hbox{.}}\mkern2mu
\raise3.4pt\hbox{.}\mkern2mu\raise7pt\hbox{.}\mkern1mu}}

\begin{document}
\begin{title}
{\Large\bf Hecke eigenvalues of Ikeda lifts}
\end{title}
\author{Nagarjuna Chary Addanki and Ameya Pitale}
\maketitle
\begin{abstract}
In this paper, we study the Hecke eigenvalues of Ikeda lifts. Using the spherical map for the Hecke algebra of the symplectic group, we obtain an explicit formula for the eigenvalues $\lambda_F(p^r)$. From this formula, we show that $\lambda_F(p^r)$ can be written as a polynomial in $p^{\pm 1/2}$ with a positive leading term. Furthermore, we prove that the coefficients of this polynomial are bounded and, as a consequence, the Hecke eigenvalues $\lambda_F(p^r)$ are positive for all sufficiently large primes $p$. 
\end{abstract}

\tableofcontents

\section{Introduction}  Let $k$ and $n$ be positive integers and let $\Gamma_n$ denotes the symplectic group $\Sp_{2n}(\Z)$. The space of Siegel cusp forms, $S_k(\Gamma_n)$ is a finite-dimensional vector space that admits a basis consisting of simultaneous eigenfunctions of the symplectic Hecke algebra $\mc{H}_n$. These eigenfunctions are called Hecke eigenforms. If $F$ is an eigenform, for each Hecke operator $T$, let $\lambda_F(T)$ denote the corresponding Hecke eigenvalue.

For a prime $p$, one can use the Satake $p$-parameters of $F$ to find a formula for $\lambda_F(T)$. This is done as follows: there is a map $\Omega$, called the spherical map, from $\cH_{n,p}$ (the local Hecke algebra at the prime $p$) to $\Q[x_0, \cdots, x_n]_{W_n}$, which is the space of polynomials in $x_0, \cdots, x_n$ with coefficients in $\Q$, which are invariant under the action of the symplectic Weyl group $W_n$. Given a Siegel Hecke eigenform $F$, the Satake $p$-parameters are $n+1$ complex numbers $\alpha_{0,p}, \cdots, \alpha_{n,p}$ (depending on $F$ and $p$) such that the eigenvalue $\lambda_F(T)$ of $T \in \cH_{n,p}$ is given by $\Omega(T)(\alpha_{0,p}, \cdots, \alpha_{n,p})$. 

The action of Hecke operators on $S_k^n$ is self-adjoint with respect to the Petersson inner product, and hence the Hecke eigenvalues are real numbers. The signs of these Hecke eigenvalues seem to encode valuable information regarding the underlying Siegel modular forms. For example, for genus $n=1$, there is an amazing result by Kowalski et al (see \cite{KLSW10}) that two elliptic modular forms having the same signs for the Hecke eigenvalues must be multiples of each other. There are partial generalizations of this result to genus $n=2$ (see \cite{Add-1}, \cite{GKP21} and \cite{KMS22}). 

Another application of the signs of Hecke eigenvalues is the result of Breulmann \cite{Br99} which states that a genus $2$ Siegel cusp form is a Saito-Kurokawa lift if and only if all its Hecke eigenvalues are positive. Saito-Kurokawa lifts are lifts from elliptic modular forms to Siegel modular forms of genus $2$ and their standard $L$-function can be written as a product of shifted $L$-functions of the elliptic modular form.  Ikeda \cite{Ik01} obtained a generalization of the Saito-Kurokawa lift to Siegel cusp forms of genus $2n$. A natural question is whether the Hecke eigenvalues of Ikeda lifts are also all positive. The eigenvalue $\lambda_F(p)$ corresponding to the Hecke operator $T(p)$ is given in terms of symmetric functions of the Satake $p$-parameters of the Siegel modular forms. Writing the Satake $p$-parameters of the Ikeda lift in terms of those of the elliptic modular forms, \cite{Add25} and  \cite{GN25} show that $\lambda_F(p) > 0$ for Ikeda lifts. For $r \geq 1$, let $T(p^r) = \sum_{\mu(g) = p^r} \Gamma_n g \Gamma_n$ and let $\lambda_F(p^r)$ be the Hecke eigenvalue for the operator $T(p^r)$. For Ikeda lifts, to show the positivity of $\lambda_F(p^r), r > 1$, one tool is the Dirichlet series relation (see \cite{An74})
$$\sum\limits_{r=0}^\infty \frac{\lambda_F(p^r)}{p^{rs}} = \frac{P(p^{-s})}{Q(p^{-s})},$$
where $Q(p^{-s})^{-1} = L_p(F, s, {\rm spin}),$ the local spin $L$-factor for $F$ and $P(x)$ is a polynomial of degree $2^n-2$. This was used by Breulmann in \cite{Br99} for the genus $2$ case. In \cite{Sc03}, Schmidt has obtained the explicit formula for the local factor of the spin $L$-function for the Ikeda lift of all genus. Unfortunately, the polynomial $P(x)$ in the numerator is known explicitly only for genus $n \leq 4$ (see \cite{Van11}). Using this explicit expression, the first author proved positivity of $\lambda(p^r)$, for $p$ large enough, for Ikeda lifts of genus $4$ (see \cite{Add25}). 

In this paper, we adopt a different approach which avoids the use of the Dirichlet series numerator. Instead, we compute the eigenvalues $\lambda_F(p^r)$ by explicitly determining the spherical image of the Hecke operators $T(p^r)$. The spherical image for the symplectic group contains a component corresponding to the spherical image of the general linear group. An explicit formula for the spherical image for the general linear group case is computed in \cite{An70}. Using this result, we compute $\Omega(T(p^r))$ in Proposition \ref{O(T(p))-prop}. In general, the formula for $\Omega(T(p^r))$ is very complicated involving a sum over the symmetric group $S_n$. The key idea is that when we substitute the Satake $p$-parameters of Ikeda lifts into the formula for $\Omega(T(p^r))$, all terms excepting the identity element of $S_n$ vanish, thus greatly simplifying the formula. 

\begin{theorem} Let $F \in S_{k+n}(\Gamma_{2n})$ be the Ikeda lift of a Hecke eigenform $f \in S_k(\Gamma_1)$. If $a,a^{-1}$ are the Satake $p$-parameters of $f$ then
     \[\lambda_F(p^r) = p^{r(nk-\frac{n}{2})}\sum_{0 \leq \delta_1 \leq \dots \delta_{2n} \leq r } a^{-nr+\sum \delta_i} p^{\sum (-n - \frac{1}{2}+i)\delta_i}  \frac{\varphi_{2n}(p^{-1})}{\varphi_{k_1}(p^{-1})\dots \varphi_{k_t}(p^{-1})} \]
\end{theorem}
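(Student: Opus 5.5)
The plan is to compute $\Omega(T(p^r))$ explicitly on the symplectic Hecke algebra of genus $2n$ and then evaluate it at the Satake $p$-parameters of $F$. First I decompose $T(p^r)$ into double cosets by the usual coset reduction. Every left coset in $\Gamma_{2n} g \Gamma_{2n}$ with $\mu(g)=p^r$ has a representative $\mat{p^rD^{-t}}{B}{0}{D}$, where $D$ runs over $\GL_{2n}(\Z)\backslash$ (integral matrices with elementary divisors dividing $p^r$) and $B$ runs over a set of residues modulo $D$. Grouping by the $\GL_{2n}(\Z)$-double coset of $D$, that is, by the elementary divisors $p^{\delta_1},\dots,p^{\delta_{2n}}$ with $0\le\delta_1\le\dots\le\delta_{2n}\le r$, gives
$$\Omega(T(p^r))=x_0^{\,\cdot}\sum_{\delta}(\text{count of }B)\cdot\omega_{\GL}\bigl(\GL_{2n}(\Z)\,\mathrm{diag}(p^{\delta_i})\,\GL_{2n}(\Z)\bigr)(x_1,\dots,x_{2n}).$$
Here $\omega_{\GL}$ is the $\GL$ spherical map. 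For it I invoke Andrianov's explicit formula \cite{An70}: it is a sum over the symmetric group $S_{2n}$ of terms $\prod x_{\sigma(i)}^{\delta_i}$, weighted by rational functions in $p$ such as $\prod(1-p^{-1}x_{\sigma(j)}/x_{\sigma(i)})/(1-x_{\sigma(j)}/x_{\sigma(i)})$. These weights are normalized by $\varphi$-factors that account for the multiplicities $k_1,\dots,k_t$ of equal $\delta_i$. This is the content of Proposition~\ref{O(T(p))-prop}, which I take as given.

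Next I substitute the Ikeda Satake parameters, which Schmidt \cite{Sc03} (following Ikeda) gives as $\alpha_i=a\,p^{\,i-n-1/2}$ for $i=1,\dots,2n$ (up to the $W$-action and inversion), with $\alpha_0$ fixed by the normalization $\alpha_0^2\prod\alpha_i=p^{\,2n k'-\dots}$, which produces the prefactor $p^{r(nk-n/2)}a^{-nr}$. \textbf{Key step:} every non-identity permutation term vanishes. The parameters form a geometric progression with ratio $p$, so for any $\sigma\ne\mathrm{id}$ (modulo the stabilizer of $\delta$) some consecutive pair has $x_{\sigma(j)}/x_{\sigma(i)}=p$. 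The numerator factor $1-p^{-1}x_{\sigma(j)}/x_{\sigma(i)}$ then vanishes, while no denominator factor vanishes because the ratios are never $1$. I must choose the ordering so that only the identity survives. This means checking that for $\sigma\notin\mathrm{Stab}(\delta)$ the factor appears with the right orientation, via a descent argument: take the smallest $i$ with $\sigma(i+1)<\sigma(i)$-type inversion and find an adjacent value pair.

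For the identity term, the $S_{2n}$-sum of the surviving weights is exactly a Macdonald-type Poincaré sum. Evaluated at the geometric progression, it gives $\varphi_{2n}(p^{-1})/\prod\varphi_{k_j}(p^{-1})$, which is the Gaussian-binomial count, combined with the $B$-count and normalization. The monomial $\prod\alpha_i^{\delta_i}$ gives $a^{\sum\delta_i}p^{\sum(i-n-1/2)\delta_i}$. Collecting the factors yields the stated formula.

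I expect the main obstacle to be the vanishing of the non-identity terms, in particular getting the orientation of the ordering of $\alpha_i$ consistent with Andrianov's convention. I also need to handle the permutations within blocks of equal $\delta_i$, which do not vanish individually but sum to the $\varphi$-quotient. I would handle this by first summing over cosets $S_{2n}/\prod S_{k_j}$ and applying the vanishing argument only to coset representatives.
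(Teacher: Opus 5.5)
Your skeleton matches the paper's: take Proposition~\ref{O(T(p))-prop} as given, substitute $a_0=p^{nk-\frac n2}a^{-n}$ and $a_i=ap^{i-n-\frac12}$, kill all but one permutation term, and evaluate the survivor. However, the decisive step is misdescribed.

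\textbf{The vanishing step.} Your claim that permutations inside blocks of equal $\delta_i$ ``do not vanish individually'' is false. The vanishing does not involve $\delta$ at all: $c(\sigma(a_1,\dots,a_{2n}))=0$ for \emph{every} $\sigma\neq 1$ (Lemma~\ref{c(sigma)}).
\begin{itemize}
\item A positional descent of $\sigma$ is the wrong object, since its two values need not be consecutive. What you need is a descent of $\tau=\sigma^{-1}$. Pick $m$ with $\tau(m)>\tau(m+1)$, and put $j=\tau(m)$, $i=\tau(m+1)$. Then $i<j$ and $\sigma(i)=\sigma(j)+1$, so $1-p^{-1}a_{\sigma(i)}a_{\sigma(j)}^{-1}=1-p^{-1+\sigma(i)-\sigma(j)}=0$. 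No denominator $1-p^{\sigma(i)-\sigma(j)}$ vanishes.
\item The orientation must be the one in (\ref{O(T(p))}), namely $x_{\sigma(i)}x_{\sigma(j)}^{-1}$ with $i<j$. With the factor you wrote, $1-p^{-1}x_{\sigma(j)}/x_{\sigma(i)}$ read with $i<j$, it is the identity term that dies, because $a_{i+1}/a_i=p$.
\item Consequently the stabilizer of $\delta$ needs no separate treatment; it is entirely absorbed by $P^{k(\delta)}(p^{-1})$. Your coset-first detour is unnecessary.
\item As sketched, that detour is also incomplete. Showing that one coset representative's term vanishes says nothing about the other terms in its coset. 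You would first have to factor each coset sum by a blockwise Macdonald identity, and then find, for every nontrivial coset, a consecutive-value pair lying in \emph{different} blocks.
\end{itemize}

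\textbf{The surviving term.} Its evaluation is only asserted (``Gaussian-binomial count''). The paper does it by telescoping (Lemma~\ref{P.c}):
\begin{itemize}
\item For fixed $i$, $\prod_{j=i+1}^{2n}\frac{1-p^{-1+i-j}}{1-p^{i-j}}=\frac{1-p^{-1+i-2n}}{1-p^{-1}}$.
\item Hence the identity weight equals $\varphi_{2n}(p^{-1})/\varphi_1(p^{-1})^{2n}$.
\item Multiplying by $\big(P^{k(\delta)}(p^{-1})\big)^{-1}=\varphi_1(p^{-1})^{2n}/\big(\varphi_{k_1}(p^{-1})\cdots\varphi_{k_t}(p^{-1})\big)$ gives exactly $\varphi_{2n}(p^{-1})/\big(\varphi_{k_1}(p^{-1})\cdots\varphi_{k_t}(p^{-1})\big)$.
\end{itemize}
Your Macdonald idea does give a valid alternative here. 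The sum $\sum_{\sigma\in S_{2n}}c(\sigma(x_1,\dots,x_{2n}))$ is the constant $\varphi_{2n}(p^{-1})/\varphi_1(p^{-1})^{2n}$. So once every $\sigma\neq 1$ is known to vanish at the Ikeda parameters, the identity term must equal that constant.

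\textbf{The $B$-count.} There is no extra count to combine. In Proposition~\ref{O(T(p))-prop}, the power of $p$ from (\ref{T(p^r)2}) has already cancelled against the normalization in (\ref{gln_sph}). What remains is only $a_0^r\,a_1^{\delta_1}\cdots a_{2n}^{\delta_{2n}}$ times that $\varphi$-quotient, which is the stated formula.
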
 
See Section \ref{sph-map} for the definition of $\varphi$ and the $k_i's$. From this formula, we deduce that $\lambda_F(p^r)$ is a polynomial in $p^{\pm 1/2}$ with coefficients in $\Z[a,a^{-1}]$. The fact that the leading coefficient is $1$ and the other coefficients are bounded implies that $\lambda_F(p^r)$ is positive for all $p$ sufficiently large. In fact, using the precise information  regarding the sizes of these coefficients obtained in Proposition \ref{lamda_F(p^r)}, we are able to get a lower bound for the primes $p$ for which the positivity result holds.
\begin{theorem}
     Let $F \in S_{k+n}(\Gamma_{2n})$ be the Ikeda lift of a Hecke eigenform $f \in S_k(\Gamma_1)$. Let $\lambda_F(p^r)$ be the eigenvalue of $F$ for the Hecke operator $T(p^r)$.
   Fix $r \geq 1$. We have $\lambda_F(p^r)>0$ for all $p > \Big(4n\big(2rn^2+2n(2n-1)\big)\Big)^2$.
\end{theorem}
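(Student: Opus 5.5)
The plan is to start from the explicit formula of the first Theorem above (which I take as given, via Proposition \ref{lamda_F(p^r)}) and then argue that the unique top-degree monomial in $p^{1/2}$ dominates all the others once $p$ is large. After pulling out the positive factor $p^{r(nk-\frac n2)}$, we are left with a sum over chains $0\le\delta_1\le\dots\le\delta_{2n}\le r$. Each term has the form
\[
a^{-nr+\sum\delta_i}\,p^{\sum(i-n-\frac12)\delta_i}\,R_\delta(p^{-1}),
\]
where $R_\delta=\varphi_{2n}/(\varphi_{k_1}\cdots\varphi_{k_t})$ is a $p^{-1}$-binomial-type quotient. Since $\varphi_m(x)=\prod_{j=1}^m(1-x^j)$, this quotient is a polynomial in $p^{-1}$ with nonnegative integer coefficients and constant term $1$.

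\textbf{Step 1: locate the leading term.} The exponent $\sum_i (i-n-\frac12)\delta_i$ is maximized by putting $\delta_i=r$ for the indices with positive weight ($i\ge n+1$) and $\delta_i=0$ for $i\le n$. This gives exponent $r\sum_{i=n+1}^{2n}(i-n-\frac12)=rn^2/2$, and the maximizer is unique. At this maximizer the $a$-power is $a^{-nr+nr}=1$, so the leading coefficient is exactly $1$. The multiplicities here are $k_1=k_2=n$, so $R$ is the Gaussian binomial $\binom{2n}{n}_{p^{-1}}$, which equals $1+O(p^{-1})$.

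\textbf{Step 2: bound everything else.} Since $f$ has level one, Deligne's bound gives $|a|=1$, so every $a$-power has absolute value $1$. Dividing by $p^{rn^2/2}$, I write $\lambda_F(p^r)p^{-r(nk-\frac n2)-rn^2/2}=1+E$. Every other contribution carries a factor of at least $p^{-1/2}$: either it comes from a non-maximal $\delta$, whose exponent drops by at least $\frac12$, or from a non-constant term of some $R_\delta$. The total drop in exponent is at most $rn^2/2+\deg R$, and $\deg \binom{2n}{\cdots}_{q}\le n(2n-1)$, so the exponent is bounded by $rn^2+2n(2n-1)$ measured in units of $p^{-1/2}$. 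I then bound the number of monomials of each degree. The number of chains $\delta$ is $\binom{r+2n}{2n}$, and the coefficient sums of $R_\delta$ are multinomial coefficients. To reach the stated constant I would instead bound the coefficient of each monomial $p^{-j/2}$ by something like $C^j$ with $C=4n$ and $j\le N:=2rn^2+2n(2n-1)$. This would give
\[
|E|\le \sum_{j\ge1}(CN)^j p^{-j/2}\le \frac{CN p^{-1/2}}{1-CNp^{-1/2}},
\]
which is $<1$ whenever $p^{1/2}>2CN$, or comfortably when $p>(CN)^2$ with a bit more care. I would match $C=4n$ by counting, for each drop $j$, how the drop can be distributed among the $2n$ positions and the $q$-factors.

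\textbf{Main obstacle.} The hard part is this coefficient count: obtaining a clean bound of the form (coefficient of $p^{-j/2}$) $\le (4nN)^j$ or similar. I would first try to bound the coefficients crudely through the generating function. Each factor $1/(1-x^j)$ has coefficients bounded by $1$, which gives $R_\delta$ coefficients at most a partition-type count, and the number of $\delta$ with a given drop $j$ is at most the number of ways to distribute $j$ half-units, roughly $(2n)^j$. If the exact constant does not come out, the argument still proves positivity for $p$ beyond some explicit bound, and I would then tune the estimates to reach $(4nN)^2$.
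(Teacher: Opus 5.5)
Your strategy matches the paper's. The top term $p^{rn^2/2}$ comes only from $\hat\delta=(0,\dots,0,r,\dots,r)$ with coefficient $1$, $|a|=1$, and all lower terms are bounded geometrically in the drop. The gap is that the estimate which produces the stated threshold is never proved. You need $|c_m|\le(4n)^{rn^2-m}$ for the coefficient $c_m$ of $p^{m/2}$ in $\tilde\lambda_F(p^r)$, but you only sketch it and concede the constant might not come out.

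This bound is exactly Proposition \ref{lamda_F(p^r)}(iii). If you may invoke it, the obstacle vanishes; otherwise you must reproduce its proof, which goes as follows.
\begin{itemize}
\item The identity $rn^2-2E(\delta)=\sum_{i\le n}(2n+1-2i)\delta_i+\sum_{i>n}(2i-2n-1)(r-\delta_i)$ turns a drop of $d$ half-units into a weak composition of $d$ into $2n$ parts. So at most $\binom{d+2n-1}{2n-1}\le(2n)^d$ chains $\delta$ have that drop.
\item The coefficient of $p^{-j}$ in $\Phi(\delta)$ (your $R_\delta$) is at most the number of permutations in $S_{2n}$ with $j$ inversions, hence $\le(2n)^j$ via Lehmer codes.
\item A $p^{-j}$ from $\Phi(\delta)$ costs $2j$ half-units but contributes only $(2n)^j$. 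Summing gives $|c_m|\le\sum_j(2n)^{rn^2-m-2j}(2n)^j\le 2(2n)^{rn^2-m}$, which is where $C=4n$ comes from.
\end{itemize}
Your remark that each $1/(1-x^j)$ has coefficients at most $1$ does not by itself bound $\Phi(\delta)$, because of the numerator $\varphi_{2n}$. What works is this: with $q=p^{-1}$, $\Phi(\delta)\prod_s[k_s]_q!=[2n]_q!$, and every factor has nonnegative coefficients and constant term $1$. Hence the coefficients of $\Phi(\delta)$ are dominated by those of $[2n]_q!$.

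Your closing estimate is also mis-booked. With coefficients bounded by $C^J$, the error is $\sum_{J=1}^{N}C^Jp^{-J/2}$, not $\sum_J(CN)^Jp^{-J/2}$. With the spurious factor $N^J$, your bound needs $p>4(CN)^2$ and does not reach the stated range; the ``bit more care'' you promise is precisely removing that factor. Done correctly, $p^{1/2}>4nN$ gives $|c_m|p^{-J/2}<N^{-J}\le 1/N$ for each of the $N$ values $J=rn^2-m\in\{1,\dots,N\}$, so $|E|<1$; this is the paper's argument. In fact $|c_m|\le 2(2n)^J$ gives $|E|\le 2x/(1-x)$ with $x=2np^{-1/2}$, so $p>36n^2$ already suffices and there is nothing delicate to tune.

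Two smaller points. You should state that $E$ is real because $\lambda_F(p^r)$ is (the Hecke operators are self-adjoint), since $|E|<1$ alone does not make a complex number positive. Also, the total drop is at most $rn^2+n(2n-1)$ in powers of $p$, not $rn^2/2+\deg R_\delta$, although you end with the correct $N$.
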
 

\textbf{Outline of the Paper :} In Section \ref{hecke-alg} we review the Hecke algebras of the general linear and symplectic groups and recall their action on Siegel modular forms. In Section \ref{sph-map}, we compute the spherical image of $T(p^r)$. In the final section we apply this formula to Ikeda lifts to obtain explicit expressions for the eigenvalues $\lambda_F(p^r)$ and deduce the positivity result.
\section{Hecke Algebra}\label{hecke-alg}
In this section we describe the Hecke algebra for the symplectic group and its action on Siegel modular forms. We direct the reader to \cite{An09} and \cite{AZ95} for the details on Hecke algebra presented below. 
\subsection*{Abstract Hecke algebra} 
Let $G$ be a group, $\Gamma$ be a subgroup of $G$ and $S$ be a subset of $G$ that is closed under multiplication. $(\Gamma,S)$ is called a Hecke pair if $[\Gamma : \Gamma \cap g^{-1} \Gamma g ] < \infty$ and $[ g^{-1} \Gamma g : \Gamma \cap g^{-1} \Gamma g ] < \infty$ for every $g \in S.$ For a Hecke pair we define $L(\Gamma,S)$ to be the free $\Z$-module generated by elements of the form $\Gamma g$ for $g \in S.$   The Hecke algebra associated with $(\Gamma, S)$ is defined as the set  
\[
\{ \sum a_i \Gamma g_i \in L(\Gamma,S) \otimes_{\Z} \Q : \sum a_i \Gamma g_i \gamma = \sum a_i \Gamma g_i \ \text{for all} \ \gamma \in \Gamma \}. 
\] If 
$$ t = \sum_{i} a_i (\Gamma g_i) \ \text{and} \ t' = \sum_{j} b_j (\Gamma h_j)$$ then 
$$ t . t' \coloneqq \sum_{i,j} a_ib_j (\Gamma g_i h_j).$$ It follows from the conditions on $S$ that for any $g \in S$, there exist finitely many $g_i$ such that $\Gamma g \Gamma = \sqcup \Gamma g_i.$ For every $g \in S$, we define the element $\Gamma g \Gamma \in L(\Gamma , S) $ as $\sum \Gamma g_i.$ The Hecke algebra associated with the pair $(\Gamma,S)$ is generated by $ \{ \Gamma g \Gamma : g \in S \}.$ In this paper we work with Hecke algebras of the general linear group and the symplectic group. 

Let $G^n = \GL_n(\Q)$ and $\Lambda_n= \GL_n(\Z).$ $(\Lambda_n, G^n)$ is a Hecke pair and the associated Hecke algebra is denoted by $\H_n.$ 
Let $\mb{H}_{n,p}$ represent the sub-algebra of $\mb{H}_n$ generated by those $\Lambda_n g \Lambda_n $ with $ g \in \mr{GL}_n(\mb{Z}[p^{-1}]).$ Further, let $\underline{\mb{H}}_{n,p}$ represent the sub-algebra generated by $\Lambda_n g \Lambda_n $ with $g \in M_n(\mb{Z}) \cap  \mr{GL}_n(\mb{Z}[p^{-1}]).$ Then we have 
\[
\H_n = \otimes' \H_{n,p}.
\] 
For a ring $R$, let the symplectic group of similitudes of genus $n$ be defined by
$$
\GSp_{2n}(R) := \{g \in \GL_{2n}(R) : {}^{t}g J_n g = \mu(g) J_n,
\mu(g) \in \GL_1(R) \} \qquad  \mbox{ where } J_n =
\mat{}{I_n}{-I_n}{}.
$$
 Let $\SSp_{2n}(R)$ be the
subgroup with $\mu(g)=1$. Let $\GSp_{2n}(\Q)^+$ be the subgroup of elements $g \in \GSp_{2n}(\Q)$ with $\mu(g) > 0$. If $G_n := \GSp_{2n}(\Q)^+ \cap M_{2n}(\Z)$ and $\Gamma_n = \Sp_{2n}(\Z)$, then $(\Gamma_n,G_n)$ forms a Hecke pair and the Hecke algebra associated with it is denoted by $\mc{H}_n$. For a prime number $p$, let $G_{n,p} = \{g \in G_n : \mu(g) = p^k, k \in \Z\}$. Let $\mathcal H_{n,p}$ be the Hecke algebra associated with the Hecke pair $(\Gamma_n, G_{n,p})$. Then we have $\mathcal H_n = \otimes' \mathcal H_{n,p}$. The action of $\mc{H}_n$ on the space of Siegel modular forms is explained in the next section. 

\subsection*{Action on Siegel modular forms}

The group $\GSp^+_{2n}(\R) := \{ g \in
\GSp_{2n}(\R) : \mu(g) > 0 \}$ acts on the Siegel upper half
space $\HH_n := \{ Z \in M_n(\C) : {}^{t}Z = Z, {\rm Im}(Z) > 0
\}$ by
$$
g \langle Z \rangle := (AZ+B)(CZ+D)^{-1} \qquad \mbox{ where } g =
\mat{A}{B}{C}{D} \in \GSp^+_{2n}(\R), Z \in \HH_n.
$$
For a positive integer $k,$ we define the slash operator $|_k$ acting on holomorphic functions $F$ on $\HH_n$ by
\begin{equation}\label{slash-operator-defn}
(F|_kg)(Z) := \mu(g)^{\frac{nk}2-\frac{n(n+1)}2} \det(CZ+D)^{-k} F(g \langle
Z \rangle) \qquad \mbox{ where } g = \mat{A}{B}{C}{D} \in
\GSp^+_{2n}(\R), Z \in \HH_n.
\end{equation}
Let $M_k^n$ (resp. $S_k^n)$ be the space of
holomorphic Siegel modular (resp. cusp) forms of weight $k$, genus $n$ with respect to $\Gamma_n$ (see Definitions 1.8 and 1.13 from \cite{Pit19}). Then $F \in M_k^n$
satisfies $F |_k \gamma = F$ for all $\gamma \in \Gamma_n$.

We now describe the Hecke operators acting on $M_k^n$.  For $g \in G_n$, let $T(g) := \Gamma_n g \Gamma_n = \sqcup_i \Gamma_n g_i \in \mathcal H_n$. The Hecke algebra $\mathcal H_n$ acts on $M_k^n$ (or on $S_k^n$) as follows. Let $F \in M_k^n$. Then
\begin{equation}\label{Hecke-action}
T(g)F := \sum\limits_i F |_k g_i.
\end{equation}
By Theorem 4.7 of \cite{An09}, we know that $M_k^n$ (and $S_k^n$) has a basis of simultaneous eigenfunctions of the Hecke algebra $\mathcal H_n$.  Let $F \in M_k^n$ be a Hecke eigenform, and let $T(g)F = \lambda(g) F$, where $\lambda(g)$ are the Hecke eigenvalues. For any prime number $p$, it is known that there
are $n+1$ complex numbers $\alpha_{0,p}, \alpha_{1,p}, \cdots,
\alpha_{n,p}$ depending on $F$, with the following property. If $g$ satisfies $\mu(g) = p^r$, then
\begin{equation}\label{classical-satake-parameters}
\lambda(g) =\alpha_{0,p}^r \sum_i
\prod\limits_{j=1}^n (\alpha_{j,p}p^{-j})^{d_{ij}},
\end{equation}
where $\Gamma_n g \Gamma_n = \bigsqcup_i \Gamma_n
g_i$, with
$$g_i = \mat{A_i}{B_i}{0}{D_i} \quad \mbox{ and } \quad D_i =
\begin{bmatrix}p^{d_{i1}}&&\ast\\&\ddots&\\0&&p^{d_{in}}\end{bmatrix}.$$
The
$\alpha_{0,p}, \alpha_{1,p}, \cdots, \alpha_{n,p}$ are called the {\it classical Satake
$p$-parameters} of the eigenform $F$. 
\section{Spherical map on Hecke Algebras}\label{sph-map} 
To compute the eigenvalues of $T(p^r)$, we first describe the spherical map for the symplectic Hecke algebra. The strategy is to relate it to the spherical map for the general linear group, for which explicit formulas are known.

Let $S_n$ be the permutation group of order $n$. Let $W_n$ represent the set of automorphisms on the ring $\mb{Q}[x_0,x_1,\dots,x_n]$ generated by the following elements 
\begin{enumerate}
    \item For $\sigma \in S_n$, we have  $\sigma(x_i)=  x_{\sigma(i)} $ for $i = 1, \cdots, n$ and $\sigma(x_0)=x_0$.
    \item For $i=1,2,\dots,n$. $\tau_i(x_0)=x_0x_i$, $\tau_i(x_i) = x_i^{-1}$ and $\tau_i(x_j) = x_j$ for all $ j \notin \{0, i\}$.
\end{enumerate} Theorems 2.20 and 3.30 of Chapter 3 of \cite{AZ95} define two isomorphisms, called the spherical maps, 
$$ \Omega : \mathcal{H}_{n,p} \longrightarrow \mb{Q}[x_0,x_1,\cdots ,x_n]_{W_n}, \quad  \omega : \mathbb{H}_{n,p} \longrightarrow \mb{Q}[x_1,\cdots ,x_n]_{S_n}.$$
Here, $\mb{Q}[x_0,x_1,\cdots ,x_n]_{W_n}$ is the ring of all $W_n$-invariant polynomials in $x_0, x_1, \cdots, x_n$ over $\Q$ and $\mb{Q}[x_1,\cdots ,x_n]_{S_n}$ is the ring of all $S_n$-invariant polynomials in $x_1, \cdots, x_n$ over $\Q$.

The significance of the spherical map is that it allows us to obtain the eigenvalue of the Hecke operators on Siegel Hecke eigenforms in terms of their Satake parameters. Suppose $F \in M_k^n$ is a Siegel Hecke eigenform with Satake $p$-parameters $\alpha_{0,p}, \alpha_{1,p}, \cdots, \alpha_{n,p}$. For $T \in  \mathcal{H}_{n,p}$, let $\Omega(T) = f_T(x_0, x_1, \cdots, x_n) \in \mb{Q}[x_0,x_1,\dots ,x_n]_{W_n}$. Then, we have
$$TF = \lambda(T) F, \text{ where } \lambda(T) = f_T(\alpha_{0,p}, \alpha_{1,p}, \cdots, \alpha_{n,p}).$$
\subsection*{Spherical image of $T(p^r)$}
For a positive integer $r,$ $T(p^r)$ is defined to be $\ds{\sum_{\mu(g)=p^r}\Gamma g \Gamma}.$ From \cite[Pg. 150]{AZ95} we have the formula for the spherical image of Hecke operators: \begin{equation}\label{T(p^r)2} \Omega(T(p^r)) = x_0^r\sum_{0 \leq \delta_1 \leq \dots \delta_n \leq r } p^{\sum (n-i+1)\delta_i} \omega(t(p^{\delta_1},\dots,p^{\delta_n})). 
\end{equation} Here, $t(p^{\delta_1},\dots, p^{\delta_n})$ represents the double coset $\Lambda_n \mr{diag}(p^{\delta_1},\dots, p^{\delta_n}) \Lambda_n$. An explicit formula for $\omega(t(p^{\delta_1},\dots,p^{\delta_n}))$ is computed in \cite{An70} using an alternate interpretation of $\mb{H}_{n,p}$.

The Hecke algebras also admits a realization as convolution algebra of functions. Let $\Q_p$ be the field of $p$-adic numbers and $\Z_p$ be its ring of integers. Let $G_p = \mr{GL}_n(\mb{Q}_p)$, $K_p = \mr{GL}_n(\mb{Z}_p)$ and $D(G_p,K_p)$ denote the space of continuous, compactly supported functions $f : G_p \rightarrow \mb{C}$ satisfying $f(\gamma_1 g \gamma_2) = f(g)$ for all $g \in G_p$ and $\gamma_1, \gamma_2 \in K_p.$ The product is given by the convolution:  \[ (f_1 \ast f_2) (x) = \int_{G_p} f_1(xy^{-1})f_2(y)dy. \] For $g \in G_p$, let $\chi_g$ represent the characteristic function of the set $K_p g K_p$. 
The two definitions of the Hecke algebra are connected via the isomorphism $\Psi : \mb{H}_{n,p} \rightarrow D(G_p,K_p)$ where $\Psi(\Lambda_n g \Lambda_n) = \chi_g .$ 

Each structure admits a spherical map and they can be connected by observing the images. $\underline{\H}_{n,p}$ is generated by elements of the form $\pi_i = \Lambda_n t(\underbrace{1,\dots,1}_{n-i},\underbrace{p,\dots,p}_i) \Lambda_n$, for $i=1,\cdots,n$. Restricting $\omega$ to $\underline{\mb{H}}_{n,p}$, we obtain the isomorphism \[
\omega : \underline{\mb{H}}_{n,p} \rightarrow \mb{Z}[\omega(\pi_1),\dots,\omega(\pi_n)].
\]Moreover, by \cite[Lemma 2.21, Chapter 3]{AZ95} \[ \omega(\pi_i) = p^{-\frac{i(i+1)}{2}} s_i(x_1, \dots ,x_n),\] 
where $s_i(x_1, \dots , x_n)$ denotes the $i^{\text{th}}$ elementary symmetric function in $x_1, \dots ,x_n$. 

On the other hand, $\Psi(\underline{\mb{H}}_{n,p})$ is generated by $\chi_{\mr{diag}(p^{\delta_1},\dots,p^{\delta_n})}$ where the tuples run through the set $\{ (\delta_1,\dots,\delta_n) :0 \leq  \delta_1 \leq \dots \leq \delta_n \leq 1,, \ \delta_i \in \mb{Z} \}$. Following  \cite[Main theorem]{An70}, denote this space as $L^{+}$ and construct the isomorphism $ \omega' : L^{+} \rightarrow \mb{Z} [\omega'(\chi_1), \dots , \omega'(\chi_n)] $. Here, for $1 \leq i \leq n$, we have $\chi_i = \chi_{\mr{diag}(\underbrace{1,\dots , 1}_{n-i}, \underbrace{p, \dots , p }_i)}$ and $\omega'(\chi_i) = p^{-\frac{i(i-1)}{2}}s_i(x_1,\dots,x_n)$.

Define a map $f_p$ on polynomials by $f_p(h(x_1,x_2,\dots,x_n)) = h(px_1, \dots, px_n).$ The two spherical maps are connected by the following commutative diagram. \[   \begin{tikzcd}
    \underline{\mb{H}}_{n,p} \arrow{d}{\omega} \arrow{r}{\Psi} & L^{+} \arrow{d}{\omega'}   \\
     \mb{Z}[\omega(\pi_1), \dots , \omega(\pi_n)] \arrow{r}{f_p} &\mb{Z}[\omega'(\chi_1), \dots , \omega'(\chi_n)]
  \end{tikzcd}
\]    A direct computation shows that $f_p(\omega(\pi_i)) = \omega'(\chi_i), $ so all the maps in the above diagram are isomorphisms. Consequently, for $0 \leq \delta_1 \leq \cdots \leq \delta_n$, we obtain the relation $\omega(t(p^{\delta_1}, \dots , p^{\delta_n})) = \omega'(\chi(\delta_1, \dots , \delta_n))(p^{-1}x_1, \dots , p^{-1}x_n)$ where $\chi(\delta_1,\dots,\delta_n) = \chi_{\mr{diag}(p^{\delta_1},\dots,p^{\delta_n})}.$

The explicit formula for $ \omega'(\chi(\delta_1, \dots , \delta_n))(x_1, \dots , x_n)$ is constructed in \cite{An70}. For $0 \leq \delta_1 \leq \cdots \leq \delta_n$,
$$\omega'(\chi(\delta_1, \dots , \delta_n))(x_1, \dots , x_n) = \big(P^{k(\delta)}(p^{-1})\big)^{-1}p^{-\sum_i (n-i)\delta_i} Q_{(\delta)}(x_1,\dots , x_n),$$
where 
$$    Q_{(\delta)}(x_1,\dots,x_n) = \sum_{ \sigma \in S_n} x_{\sigma(1)}^{\delta_1}\dots x_{\sigma(n)}^{\delta_n} c(\sigma(x_1,\dots,x_n)), 
$$
\[
c(\sigma(x_1,\dots,x_n))  = \prod_{i=1}^{n-1} \prod_{j=i+1}^n \frac{1-p^{-1}x_{\sigma(i)}x_{\sigma(j)}^{-1}}{1-x_{\sigma(i)}x_{\sigma(j)}^{-1}},
\]
$\delta = (\underbrace{\delta_1, \dots , \delta_1}_{k_1}, \dots , \underbrace{\delta_t, \dots , \delta_t}_{k_t}),$  $k(\delta) = (k_1,\dots ,k_t)$ and 
$$    P^{k(\delta)}(p^{-1}) = \frac{ \varphi_{k_1}(p^{-1}) \dots \varphi_{k_t}(p^{-1})}{(\varphi_{1}(p^{-1}))^n}.$$
Here $\varphi_m(x) := \prod_{i=1}^m (x^i-1)$ for $m \in \N$ and $\varphi_0(x) = 1$. 

Hence, 
\begin{align}\label{gln_sph}
    \omega(t(p^{\delta_1}, \dots , p^{\delta_n}))  & = \big(P^{k(\delta)}(p^{-1})\big)^{-1}p^{-\sum_i (n-i)\delta_i} Q_{(\delta)}(p^{-1}x_1,\dots , p^{-1}x_n) \nonumber \\
    & = \big(P^{k(\delta)}(p^{-1})\big)^{-1}p^{-\sum_i (n-i+1)\delta_i}Q_{(\delta)}(x_1,\dots , x_n).
\end{align}
\begin{prop} \label{O(T(p))-prop}
For $r \geq 1$, we have
\begin{equation}\label{O(T(p))}
    \Omega(T(p^r)) = x_0^r \sum_{0 \leq \delta_1 \leq \dots \delta_n \leq r } \big(P^{k(\delta)}(p^{-1})\big)^{-1} Q_{\delta}(x_1,\dots,x_n).
\end{equation}
    \end{prop}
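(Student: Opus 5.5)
The plan is to substitute the explicit general linear formula \eqref{gln_sph} directly into Andrianov's expression \eqref{T(p^r)2} for $\Omega(T(p^r))$ and observe that the powers of $p$ cancel term by term. Concretely, \eqref{T(p^r)2} gives
\[
\Omega(T(p^r)) = x_0^r\sum_{0 \leq \delta_1 \leq \dots \leq \delta_n \leq r } p^{\sum_i (n-i+1)\delta_i}\, \omega(t(p^{\delta_1},\dots,p^{\delta_n})),
\]
and for each fixed tuple $0 \le \delta_1 \le \dots \le \delta_n$ the second line of \eqref{gln_sph} reads
\[
\omega(t(p^{\delta_1},\dots,p^{\delta_n})) = \big(P^{k(\delta)}(p^{-1})\big)^{-1}p^{-\sum_i (n-i+1)\delta_i}Q_{(\delta)}(x_1,\dots,x_n).
\]
Multiplying, the factor $p^{\sum_i(n-i+1)\delta_i}$ cancels exactly, leaving $\big(P^{k(\delta)}(p^{-1})\big)^{-1}Q_{(\delta)}(x_1,\dots,x_n)$ for each summand. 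Summing over the same index set then gives \eqref{O(T(p))}.

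The only point needing care is the validity of \eqref{gln_sph} for every tuple in the range of summation, since the comparison between $\omega$ and $\omega'$ was set up on $\underline{\H}_{n,p}$. Every tuple with $0\le\delta_1\le\dots\le\delta_n\le r$ has integral entries, so $\mathrm{diag}(p^{\delta_1},\dots,p^{\delta_n})\in M_n(\Z)\cap \GL_n(\Z[p^{-1}])$, and hence $t(p^{\delta_1},\dots,p^{\delta_n})\in\underline{\H}_{n,p}$. Because $\omega$ and $\omega'\circ\Psi$ agree up to the substitution $f_p$ on the generators $\pi_i$ (the identity $f_p(\omega(\pi_i))=\omega'(\chi_i)$), and both maps and $f_p$ are ring homomorphisms, the relation
\[
\omega(t(p^{\delta_1},\dots,p^{\delta_n}))=\omega'(\chi(\delta_1,\dots,\delta_n))(p^{-1}x_1,\dots,p^{-1}x_n)
\]
holds on all of $\underline{\H}_{n,p}$. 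The first line of \eqref{gln_sph} is this relation combined with Andrianov's formula.

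It remains to verify the passage from the first to the second line of \eqref{gln_sph}, which is the only computation involved. The rational functions $c(\sigma(\cdot))$ depend only on the ratios $x_{\sigma(i)}x_{\sigma(j)}^{-1}$ and are therefore invariant under scaling all variables by $p^{-1}$. The monomial $x_{\sigma(1)}^{\delta_1}\cdots x_{\sigma(n)}^{\delta_n}$ picks up a factor $p^{-\sum_i\delta_i}$. Hence
\[
Q_{(\delta)}(p^{-1}x) = p^{-\sum_i\delta_i}Q_{(\delta)}(x),
\]
and combining this with $p^{-\sum_i(n-i)\delta_i}$ gives $p^{-\sum_i(n-i+1)\delta_i}$. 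I expect no real obstacle. The main thing to watch is the bookkeeping of the exponent conventions, namely $(n-i)$ versus $(n-i+1)$ and the direction of the $f_p$ substitution, so that the cancellation is exact rather than off by a factor $p^{\sum_i\delta_i}$.
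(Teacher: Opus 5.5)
Your proposal is correct and is the same argument as the paper's: substitute the second line of \eqref{gln_sph} into \eqref{T(p^r)2}, and the factor $p^{\sum_i (n-i+1)\delta_i}$ cancels term by term. Your additional checks are exactly the computations the paper carries out in the text leading up to the proposition, namely that the relation between $\omega$ and $\omega'$ extends from the generators $\pi_i$ to all of $\underline{\H}_{n,p}$, and that $Q_{(\delta)}(p^{-1}x)=p^{-\sum_i\delta_i}Q_{(\delta)}(x)$.
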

\begin{proof} 
This follows from (\ref{T(p^r)2}) and (\ref{gln_sph}).
    \end{proof} 

\section{Eigenvalues of Ikeda lifts}
Let $F \in S_{k+n}(\Gamma_{2n})$ be the Ikeda lift of a Hecke eigenform  $f \in S_k(\Gamma_1)$ as defined in \cite{Ik01}. If the Satake $p$-parameters of $f$ are $a,a^{-1}$, then the Satake $p$-parameters of $F$ are $\alpha_{0,p} = a_0 = p^{nk-\frac{n}{2}}a^{-n}$ and $\alpha_{i,p} = a_i = a p^{-n-\frac{1}{2}+i}$ for $1 \leq i \leq 2n$. Here, the parameters are normalized so that $a_0^2a_1\dots a_{2n}=p^{2nk-n}$. Since $\lambda_F(p^r) = \Omega(T(p^r))[a_0,a_1,\dots,a_{2n}]$, we substitute $x_i$ with $a_i$ in (\ref{O(T(p))}) and obtain 
\begin{equation}\label{eig-val}
    \lambda_F(p^r) = a_0^{r}\sum_{0 \leq \delta_1 \leq \dots \delta_{2n} \leq r }\big(P^{k(\delta)}(p^{-1})\big)^{-1}Q_{\delta}(a_1,\dots,a_{2n})
\end{equation}

where \[
 Q_{\delta}(a_1,\dots,a_{2n}) = \sum_{ \sigma \in S_{2n}} a_{\sigma(1)}^{\delta_1}\dots a_{\sigma(2n)}^{\delta_{2n}} c(\sigma(a_1,\dots,a_{2n}))
\] and 
\[
c(\sigma(a_1,\dots,a_{2n}))=  \prod_{i=1}^{2n-1} \prod_{j=i+1}^{2n} \frac{1-p^{-1}a_{\sigma(i)}a_{\sigma(j)}^{-1}}{1-a_{\sigma(i)}a_{\sigma(j)}^{-1}}.
\]
In the following lemma, we show that all but the contribution from $\sigma=1$ to $Q_\delta$ is vanishing.
\begin{lemma}\label{c(sigma)}
  We have $c(\sigma(a_1,\dots,a_{2n})) = 0 $ for all $\sigma \in S_{2n}$ with $\sigma \neq 1$.    
\end{lemma}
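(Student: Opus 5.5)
Fix $\sigma \neq 1$ in $S_{2n}$. The plan is to exhibit a single pair $i<j$ for which the numerator factor $1-p^{-1}a_{\sigma(i)}a_{\sigma(j)}^{-1}$ in $c(\sigma(a_1,\dots,a_{2n}))$ vanishes. The key input is the ratio formula coming from $a_m = a\,p^{-n-\frac12+m}$:
\[
a_{\sigma(i)}a_{\sigma(j)}^{-1} = p^{\sigma(i)-\sigma(j)}.
\]
Hence that numerator factor equals $1-p^{\sigma(i)-\sigma(j)-1}$, which is zero exactly when $\sigma(i)=\sigma(j)+1$.

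Next I find such a pair. Since $\sigma\neq 1$, the sequence $\sigma(1),\dots,\sigma(2n)$ is not increasing. I claim there is some value $m\in\{1,\dots,2n-1\}$ with $\sigma^{-1}(m+1)<\sigma^{-1}(m)$. If no such $m$ existed, then $\sigma^{-1}(1)<\sigma^{-1}(2)<\dots<\sigma^{-1}(2n)$, which forces $\sigma^{-1}=1$. Taking $i=\sigma^{-1}(m+1)$ and $j=\sigma^{-1}(m)$, we have $i<j$ and $\sigma(i)=\sigma(j)+1$. The factor indexed by $(i,j)$ therefore has vanishing numerator.

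The step that needs care is the denominators. For a product to be zero, no other factor may blow up. Every denominator is $1-a_{\sigma(i')}a_{\sigma(j')}^{-1}=1-p^{\sigma(i')-\sigma(j')}$ with $\sigma(i')\neq\sigma(j')$, so each is a nonzero number. Every factor of $c(\sigma(a_1,\dots,a_{2n}))$ is thus finite, and one of them is $0$, so the product is $0$.

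One subtlety to flag: Proposition \ref{O(T(p))-prop} is an identity of symmetric polynomials, but the individual summands $c(\sigma(\cdot))$ are only rational functions. Evaluating term by term at $(a_1,\dots,a_{2n})$ is legitimate because the evaluation point avoids every pole of every summand, which is exactly the nonvanishing of the denominators checked above. Consequently $Q_\delta(a_1,\dots,a_{2n})$ reduces to its $\sigma=1$ term.
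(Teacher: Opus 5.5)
Your proof is correct and is essentially the paper's argument: the paper also takes a descent $\sigma^{-1}(m) > \sigma^{-1}(m+1)$, sets $i=\sigma^{-1}(m+1)$ and $j=\sigma^{-1}(m)$, and observes that $1-p^{-1}a_{\sigma(i)}a_{\sigma(j)}^{-1} = 1-p^{-1+\sigma(i)-\sigma(j)} = 0$. The paper leaves implicit your extra check that every denominator $1-p^{\sigma(i')-\sigma(j')}$ is nonzero, and this check is worth including: it is what guarantees that the product really is $0$ and that term-by-term evaluation of $Q_\delta$ at $(a_1,\dots,a_{2n})$ is legitimate.
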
 
\begin{proof}
    Let $\tau = \sigma^{-1}$. Since $\tau \neq 1$, there exists a positive integer $m$ such that  $1 \leq m \leq 2n-1$ and $\tau(m) > \tau(m+1)$. Set $j=\tau(m)$ and $i = \tau(m+1)$. Then $j>i$ and 
    \[
    \sigma(i) = \sigma(\tau(m+1)) = m+1 = \sigma(j)+1.
    \] 
    Hence, 
    \[
1-p^{-1}a_{\sigma(i)}a_{\sigma(j)}^{-1} = 1-p^{-1}ap^{-n-\frac{1}{2}+\sigma(i)}a^{-1}p^{n+\frac{1}{2}-\sigma(j)}=1-p^{-1+\sigma(i)-\sigma(j)}=0.
\] 
This implies that $c(\sigma(a_1,\dots,a_{2n})) = 0$.
\end{proof}
The above lemma allows us to conclude that, for any $\delta$, we obtain 
\[
Q_\delta(a_1,\dots,a_{2n}) = a_1^{\delta_1} \dots a_{2n}^{\delta_{2n}} \prod_{i=1}^{2n-1} \prod_{j=i+1}^{2n}  \frac{1-p^{-1}a_{i}a_j^{-1}}{1-a_{i}a_{j}^{-1}}.
\]
 Hence, 
\[
\lambda_F(p^r) =  a_0^{r}\sum_{0 \leq \delta_1 \leq \dots \delta_{2n} \leq r }\big(P^{k(\delta)}(p^{-1})\big)^{-1}a_1^{\delta_1} \dots a_{2n}^{\delta_{2n}} \prod_{i=1}^{2n-1} \prod_{j=i+1}^{2n}  \frac{1-p^{-1}a_{i}a_j^{-1}}{1-a_{i}a_{j}^{-1}}.
\]
\begin{lemma}\label{P.c}
For any $(\delta)$, we have
   $$ \big(P^{k(\delta)}(p^{-1})\big)^{-1}\prod_{i=1}^{2n-1} \prod_{j=i+1}^{2n}  \frac{1-p^{-1}a_{i}a_j^{-1}}{1-a_{i}a_{j}^{-1}} = \frac{\varphi_{2n}(p^{-1})}{\varphi_{k_1}(p^{-1})\dots \varphi_{k_t}(p^{-1})}.$$
\end{lemma}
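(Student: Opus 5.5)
The plan is to evaluate the double product explicitly and then compare it with the definition of $P^{k(\delta)}(p^{-1})$, recalling that the general linear group there has rank $2n$. The key observation is that the product does not depend on $\delta$ at all. By the definition of the Ikeda Satake parameters, $a_ia_j^{-1} = p^{i-j}$, since the factors of $a$ cancel. Hence each factor depends only on $d = j-i\in\{1,\dots,2n-1\}$, and equals
\[
\frac{1-p^{-(d+1)}}{1-p^{-d}}.
\]

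First I will group the pairs $(i,j)$ with $1\le i<j\le 2n$ by $d=j-i$. There are exactly $2n-d$ such pairs, so
\[
\prod_{i=1}^{2n-1}\prod_{j=i+1}^{2n}\frac{1-p^{-1}a_ia_j^{-1}}{1-a_ia_j^{-1}} = \prod_{d=1}^{2n-1}\left(\frac{1-p^{-(d+1)}}{1-p^{-d}}\right)^{2n-d}.
\]
Next I will telescope this product. Write $e=d+1$ in the numerator, so that $1-p^{-e}$ appears with exponent $2n-e+1$ for $2\le e\le 2n$. In the denominator, $1-p^{-e}$ appears with exponent $2n-e$ for $1\le e\le 2n-1$. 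The net exponent is therefore:
\begin{itemize}
\item $1$ for $2\le e\le 2n-1$;
\item $1$ for $e=2n$, which occurs only in the numerator;
\item $-(2n-1)$ for $e=1$.
\end{itemize}
Thus the product equals
\[
\frac{\prod_{e=2}^{2n}(1-p^{-e})}{(1-p^{-1})^{2n-1}}=\frac{\prod_{e=1}^{2n}(1-p^{-e})}{(1-p^{-1})^{2n}}=\frac{\varphi_{2n}(p^{-1})}{(\varphi_1(p^{-1}))^{2n}}.
\]
The last equality holds because $\varphi_m(x)=\prod_{i=1}^m(x^i-1)$ differs from $\prod_{i=1}^m(1-x^i)$ by the sign $(-1)^m$. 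The signs match here: numerator and denominator each carry $2n$ such factors, so the sign is $(-1)^{2n}/(-1)^{2n}=1$.

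Finally, the definition of $P^{k(\delta)}$ with $n$ replaced by $2n$ gives
\[
\big(P^{k(\delta)}(p^{-1})\big)^{-1}=\frac{(\varphi_1(p^{-1}))^{2n}}{\varphi_{k_1}(p^{-1})\cdots\varphi_{k_t}(p^{-1})}.
\]
Multiplying this by the value of the product, the factors $(\varphi_1(p^{-1}))^{2n}$ cancel, which yields the claimed identity.

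There is no real obstacle. The only points needing care are the exponent bookkeeping in the telescoping step, especially at the endpoints $e=1$ and $e=2n$, and checking that the sign convention in $\varphi_m$ produces no stray $(-1)$.
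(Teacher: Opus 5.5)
Your proof is correct and follows essentially the same route as the paper: substitute $a_ia_j^{-1}=p^{i-j}$, telescope the double product to $\varphi_{2n}(p^{-1})/\varphi_1(p^{-1})^{2n}$, and cancel this against $(P^{k(\delta)}(p^{-1}))^{-1}$ taken at rank $2n$. The only difference is bookkeeping: the paper telescopes the inner product over $j$ for each fixed $i$ to get $(1-p^{-(2n-i+1)})/(1-p^{-1})$, whereas you group pairs by $d=j-i$ (with multiplicity $2n-d$) and tally net exponents, and you also make explicit the sign check for $\varphi_m$ that the paper leaves implicit.
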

\begin{proof}
 For a fixed $i$ we have
    \[
    \prod_{j=i+1}^{2n}  \frac{1-p^{-1}a_{i}a_j^{-1}}{1-a_{i}a_{j}^{-1}} = \prod_{j=i+1}^{2n}  \frac{1-p^{-1+i-j}}{1-p^{i-j}} = \frac{1-p^{-1+i-2n}}{1-p^{-1}}.
    \] Taking the product over $i$ we obtain
    \[
    \prod_{i=1}^{2n-1} \prod_{j=i+1}^{2n}  \frac{1-p^{-1}a_{i}a_j^{-1}}{1-a_{i}a_{j}^{-1}} =  \prod_{i=1}^{2n-1} \frac{1-p^{-1+i-2n}}{1-p^{-1}} = \frac{\varphi_{2n}(p^{-1})}{\varphi_1(p^{-1})^{2n}}.
    \]
    Hence, 
    \[
     \big(P^{k(\delta)}(p^{-1})\big)^{-1}\frac{\varphi_{2n}(p^{-1})}{\varphi_1(p^{-1})^{2n}} =   \frac{\varphi_{2n}(p^{-1})}{\varphi_{k_1}(p^{-1})\dots \varphi_{k_t}(p^{-1})}.
    \]
\end{proof}

The above results allow us to obtain a formula for $\lambda_F(p^r)$ that is much simpler than the general formula in (\ref{O(T(p))}) in the case that $F$ is an Ikeda lift.
\begin{thm}\label{eigenvalue-ikeda}
 Let $F \in S_{k+n}(\Gamma_{2n})$ be the Ikeda lift of a Hecke eigenform $f \in S_k(\Gamma_1)$. Let the Satake $p$-parameters of $f$ be $a,a^{-1}$.  For any prime $p$ and $r \in \N$, we have
    \begin{equation}
        \lambda_F(p^r) = p^{r(nk-\frac{n}{2})}\sum_{0 \leq \delta_1 \leq \dots \delta_{2n} \leq r } a^{-nr+\sum \delta_i} p^{\sum (-n - \frac{1}{2}+i)\delta_i}  \frac{\varphi_{2n}(p^{-1})}{\varphi_{k_1}(p^{-1})\dots \varphi_{k_t}(p^{-1})}.
    \end{equation}
\end{thm}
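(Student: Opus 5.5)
The proof is a direct assembly of the results already established in this section. I start from the general expression (\ref{eig-val}) for $\lambda_F(p^r)$, obtained by substituting the Satake $p$-parameters $a_0=p^{nk-\frac n2}a^{-n}$ and $a_i=ap^{-n-\frac12+i}$ into Proposition \ref{O(T(p))-prop}.

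Lemma \ref{c(sigma)} shows that for each $\delta$ only the term $\sigma=1$ survives in $Q_\delta(a_1,\dots,a_{2n})$. This reduces $\lambda_F(p^r)$ to the displayed single-term formula preceding Lemma \ref{P.c}. Lemma \ref{P.c} then replaces the product of $\big(P^{k(\delta)}(p^{-1})\big)^{-1}$ with the $c$-factor by $\varphi_{2n}(p^{-1})/(\varphi_{k_1}(p^{-1})\cdots\varphi_{k_t}(p^{-1}))$.

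What remains is to evaluate the monomials explicitly. First,
\[
a_0^r=p^{r(nk-\frac n2)}a^{-nr}.
\]
Second,
\[
a_1^{\delta_1}\cdots a_{2n}^{\delta_{2n}}=\prod_{i=1}^{2n}\big(ap^{-n-\frac12+i}\big)^{\delta_i}=a^{\sum\delta_i}\,p^{\sum(-n-\frac12+i)\delta_i}.
\]
Multiplying these gives the factor $p^{r(nk-\frac n2)}a^{-nr+\sum\delta_i}p^{\sum(-n-\frac12+i)\delta_i}$. Pulling $p^{r(nk-\frac n2)}$ outside the sum yields exactly the claimed formula.

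The only point needing care is that Lemma \ref{c(sigma)} is applied to expressions that are a priori formal rational functions. The denominators $1-a_{\sigma(i)}a_{\sigma(j)}^{-1}=1-p^{\sigma(i)-\sigma(j)}$ never vanish, since $\sigma(i)\neq\sigma(j)$. Hence each $c(\sigma(a_1,\dots,a_{2n}))$ is a well-defined number, and evaluating $\Omega(T(p^r))$ at these parameters term by term is legitimate: $Q_\delta$ is a symmetric polynomial, equal to the sum of its terms wherever all of them are defined. With this checked, the rest is bookkeeping of exponents.
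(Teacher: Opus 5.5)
Your proposal is correct and is essentially the paper's proof: substitute the Satake parameters into (\ref{eig-val}), discard the $\sigma\neq 1$ terms by Lemma \ref{c(sigma)}, simplify the remaining factor with Lemma \ref{P.c}, and expand $a_0^r a_1^{\delta_1}\cdots a_{2n}^{\delta_{2n}}$. Your check that the denominators $1-p^{\sigma(i)-\sigma(j)}$ never vanish, so that evaluating $Q_\delta$ term by term is legitimate, fills in a justification the paper leaves implicit.
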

\begin{proof}
    Substitute $a_0=p^{nk-\frac{n}{2}}a^{-n}$ and $a_i = ap^{-n-\frac{1}{2}+i}$ in Eq (\ref{eig-val}). Then the theorem follows from Lemmas \ref{c(sigma)} and \ref{P.c}.  
\end{proof}

We will now rewrite the Hecke eigenvalue $\lambda_F(p^r)$ of $F$ as a polynomial in $p^{\pm 1/2}$ and obtain precise information about the coefficients of the polynomial. We will need some combinatorial information for that. Let $(\delta) = (\delta_1,\dots,\delta_{2n})$ with $0 \leq \delta_1 \leq \cdots \leq \delta_{2n} \leq r$. For $\sigma \in S_{2n}$, let $(\delta)_\sigma := (\delta_{\sigma(1)}, \cdots, \delta_{\sigma(2n)})$. An inversion of $(\delta)_\sigma$ is a $4$-tuple $(i,j,\delta_{\sigma(i)},\delta_{\sigma(j)})$ such that $i<j$ and $\delta_{\sigma(i)}>\delta_{\sigma(j)}$. Let ${\rm inv}((\delta)_\sigma)$ be the number of inversions of $(\delta)_\sigma$.  Let $S(\delta) := \{\sigma \in S_{2n} : (\delta) = (\delta)_\sigma\}$, and let $S_{(\delta)}$ be a set of representatives for $S_{2n}/S(\delta)$.

Further, for a positive integer $j$, a weak composition of $j$ into $2n$ parts is a set $\{b_1, b_2, \cdots, b_{2n}\}$ such that $b_i \geq 0$ for all $i$ and $b_1+b_2+\cdots+b_{2n} = j$. Note that $\{b_1, b_2, \cdots, b_{2n}\}$ is a weak composition of $j$ into $2n$ parts if and only if $\{b_1+1, b_2+1, \cdots, b_{2n}+1\}$ is a partition of $j+2n$ into $2n$ parts. Hence the number of weak compositions of $j$ into $2n$ parts equals $\binom{j+2n-1}{2n-1}$.
\begin{prop}\label{c-delta-prop}
For $(\delta) = (\delta_1,\dots,\delta_{2n})$, we denote $\displaystyle{\frac{\varphi_{2n}(p^{-1})}{\varphi_{k_1}(p^{-1})\dots \varphi_{k_t}(p^{-1})}}$ by $\Phi(\delta)$. We have 
$$ \Phi(\delta) =  \sum\limits_{j = 0}^{n(2n-1)} c_{\delta,j}p^{-j},$$
 where  $c_{\delta, j}$ are non-negative integers such that $c_{\delta,0}=1$ and for all $j$, we have the  bound
   \begin{equation}\label{eq1}
     c_{\delta,j} \leq (2n)^j.
    \end{equation}
\end{prop}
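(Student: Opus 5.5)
Write $q=p^{-1}$. The ratio $\Phi(\delta)=\varphi_{2n}(q)/(\varphi_{k_1}(q)\cdots\varphi_{k_t}(q))$, with $k_1+\dots+k_t=2n$, is the Gaussian multinomial coefficient $\binom{2n}{k_1,\dots,k_t}_q$. To see this, note that $\varphi_m(q)=\prod_{i=1}^m(q^i-1)=(-1)^m\prod_{i=1}^m(1-q^i)$. The sign factors cancel, since $(-1)^{2n}=(-1)^{k_1+\dots+k_t}$. So $\Phi(\delta)=[2n]_q!/([k_1]_q!\cdots[k_t]_q!)$, where $[m]_q!=\prod_{i=1}^m(1-q^i)/(1-q)$.

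We then use the standard combinatorial interpretation, which the notation set up before the statement is designed for. With $\mathrm{inv}$ and $S_{(\delta)}$ as defined there,
$$\Phi(\delta)=\sum_{\sigma\in S_{(\delta)}} q^{\mathrm{inv}((\delta)_\sigma)}.$$
That is, the $q$-multinomial counts the distinct rearrangements of the multiset $(\delta)$ by their number of inversions (MacMahon). I would prove this by induction on $t$. For $t=2$, the $q$-binomial recursion $\binom{m}{k}_q=\binom{m-1}{k-1}_q+q^{k}\binom{m-1}{k}_q$ corresponds to conditioning on whether the last entry is the smaller or the larger value. The general case follows by factoring the multinomial as a product of binomials, merging one block at a time. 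Alternatively, one can cite the identity $\sum_{\pi\in S_m}q^{\mathrm{inv}(\pi)}=[m]_q!$ and divide by the stabilizer $S(\delta)$, since each rearrangement has a unique representative with minimal inversions in its coset.

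From this formula the required properties follow directly.
\begin{itemize}
\item $c_{\delta,j}=\#\{\sigma\in S_{(\delta)}:\mathrm{inv}((\delta)_\sigma)=j\}$ is a non-negative integer.
\item The number of inversions is at most $\binom{2n}{2}=n(2n-1)$, which gives the range of $j$.
\item $c_{\delta,0}=1$, because only the weakly increasing arrangement $(\delta)$ itself has no inversions.
\end{itemize}

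For the bound (\ref{eq1}), it suffices to bound by the case where all $k_i=1$: every rearrangement class injects into $S_{2n}$ preserving the inversion count by choosing the minimal representative. So $c_{\delta,j}\le I_{2n}(j)$, the number of permutations of $2n$ letters with exactly $j$ inversions. Via the Lehmer code, $I_{2n}(j)$ counts sequences $(b_1,\dots,b_{2n})$ with $0\le b_i\le i-1$ and $\sum b_i=j$. Dropping the upper constraints gives $I_{2n}(j)\le\binom{j+2n-1}{2n-1}$, the count of weak compositions noted just before the statement.

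It remains to show $\binom{j+2n-1}{2n-1}\le(2n)^j$. Write $\binom{j+2n-1}{j}=\prod_{i=1}^{j}\frac{2n-1+i}{i}$. Each factor is $1+\frac{2n-1}{i}\le 2n$, which gives the bound.

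The main obstacle is justifying the inversion-generating-function identity for the multiset cleanly, in particular the fact that minimal coset representatives preserve inversion counts. If that step is awkward, I would instead bound the coefficients directly: expand $\Phi(\delta)$ as a product of $q$-binomials. Each $\binom{m}{k}_q$ has non-negative coefficients by the recursion, and the product is dominated coefficientwise by $[2n]_q!=\prod_{i=1}^{2n}(1+q+\dots+q^{i-1})$. The coefficients of $[2n]_q!$ are again bounded by the weak composition count.
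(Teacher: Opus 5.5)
Your proposal is correct and follows the paper's proof essentially step by step: the paper also invokes the inversion generating function for rearrangements of the multiset $(\delta)$ (Stanley, EC1, Prop.~1.7.1), bounds $c_{\delta,j}$ by the number of permutations in $S_{2n}$ with $j$ inversions, and controls that count via the Lehmer code and weak compositions, using $\binom{j+2n-1}{2n-1}\le(2n)^j$. You also supply details the paper only asserts (the sign cancellation in $\Phi(\delta)$, the inversion-preserving injection into $S_{2n}$, and a proof of the binomial inequality), and your fallback argument via coefficientwise domination by $[2n]_q!$ would work equally well.
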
 
\begin{proof}
It follows from Proposition 1.7.1 of \cite{Ri12} that 
$$ \Phi(\delta) =  \sum_{\sigma \in S_{(\delta)}} p^{-{\rm inv}((\delta)_\sigma)}.$$
For $j \geq 0$, let $c_{\delta,j} = \#\{\sigma \in S_{(\delta)} : {\rm inv}((\delta)_\sigma) = j\}$.  Hence, we get
$$\Phi(\delta) =  \sum\limits_{j \geq 0} c_{\delta,j}p^{-j}$$
with $c_{\delta, j} \geq 0$ for all $j \geq 0$. From the definition it is clear that $c_{\delta, 0} = 1$. Let us denote by $c(2n,j) = c_{\delta, j}$ where $(\delta) = (1,2,\cdots,2n)$. We see that $c_{\delta, j} \leq c(2n,j)$ for all $(\delta)$. Since the maximum number of inversions for any permutation in $S_{2n}$ is $\binom{2n}{2}=n(2n-1)$, we see that $c(2n,j) = 0$ for all $j > n(2n-1)$. Hence, the degree of $\Phi(\delta)$ is bounded above by $n(2n-1)$. Finally, we will give a bound on $c(2n,j)$ which will imply the bound for $c_{\delta, j}$ in the statement of the proposition.

For any $\sigma \in S_{2n}$, associate the vector $b_\sigma = (b_1, b_2, \cdots, b_{2n})$ by $b_i = \#\{k < i : \sigma(i) < \sigma(k))$. We see that $0 \leq b_i \leq i-1$ and if $\sigma$ has exactly $j$ inversions, then $b_1+b_2+\cdots+b_{2n} = j$. Hence, to every $\sigma \in S_{2n}$ with $j$ inversions, we can associate a weak composition of $j$ into $2n$ parts. Since the vector $b_\sigma$ completely determines $\sigma$, we see that 
   $$c(2n,j) \leq \#\{\text{weak composition of } j \text{ into } 2n \text{ parts}\} =  \binom{j+2n-1}{2n-1}.$$
Since $\binom{j+2n-1}{2n-1} \leq (2n)^j$, we get the proposition.    
\end{proof}

\begin{prop}\label{lamda_F(p^r)} 
If $\tilde{\lambda}_F(p^r) = \frac{\lambda_F(p^r)}{p^{r(nk-\frac{n}{2})}}$ then, 
    \[
    \tilde{\lambda}_F(p^r) = \sum_{m \in Z} c_mp^{\frac{m}{2}}
    \] where the coefficients $c_m$ satisfy the following.
    \begin{enumerate}
    \item $c_m = 0$ for all $ m > rn^2$ or $m < -rn^2-2n(2n-1)$,
    \item  $c_{rn^2} =1$ and 
    \item $\lvert c_m \rvert^{\frac{1}{rn^2-m}} \leq 4n$ for all $m$.
    \end{enumerate}
\end{prop}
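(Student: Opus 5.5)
The plan is to expand $\tilde{\lambda}_F(p^r)$ directly from Theorem~\ref{eigenvalue-ikeda} and Proposition~\ref{c-delta-prop}. We have
$$\tilde{\lambda}_F(p^r)=\sum_{\delta} a^{-nr+\sum\delta_i}\,p^{E(\delta)}\sum_{j=0}^{n(2n-1)} c_{\delta,j}p^{-j},\qquad E(\delta)=\sum_{i=1}^{2n}\bigl(i-n-\tfrac12\bigr)\delta_i .$$
So $c_m=\sum a^{-nr+\sum\delta_i}c_{\delta,j}$, where the sum runs over pairs $(\delta,j)$ with $2E(\delta)-2j=m$. By Deligne's bound we have $|a|=1$, so every power of $a$ has absolute value $1$ and can be ignored when estimating $|c_m|$.

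\textbf{Step 1: rewrite $E(\delta)$ in terms of increments.} Put $\delta_0=0$, $e_k=\delta_k-\delta_{k-1}\ge 0$ for $1\le k\le 2n$, and the slack $e_0=r-\delta_{2n}=r-\sum_k e_k\ge 0$. Summation by parts gives $E(\delta)=\sum_k e_kT_k$ with tail sums $T_k=\sum_{i\ge k}(i-n-\frac12)=\frac12(2n-k+1)(k-1)$. The identity $n^2-2T_k=(k-n-1)^2$ then yields the key formula
$$D(\delta):=rn^2-2E(\delta)=e_0n^2+\sum_{k\ne n+1}e_k(k-n-1)^2\ \ge 0 .$$
Every term here is a nonnegative integer, and each unit of $e_0$ or of $e_k$ with $k\neq n+1$ costs at least $1$. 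Two consequences follow. First, $2E(\delta)\in[0,rn^2]$. Since $0\le j\le n(2n-1)$, every exponent $m=2E-2j$ lies in $[-2n(2n-1),rn^2]$, which is inside the range claimed in (1). Second, $D=0$ forces $e_0=0$ and $e_k=0$ for $k\ne n+1$, so $e_{n+1}=r$. Thus the unique maximizer is $\delta=(0,\dots,0,r,\dots,r)$. For it, $\sum\delta_i=nr$, so the power of $a$ is $1$; together with $c_{\delta,0}=1$ this proves (2), because every other pair $(\delta,j)$ has $m<rn^2$.

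\textbf{Step 2: the coefficient bound (3), which I expect to be the main obstacle.} Set $d=rn^2-m=D(\delta)+2j$; I may assume $d\ge1$. Given $D$, the quantity $e_{n+1}$ is determined by the other $2n$ increments $e_0$ and $e_k$ ($k\ne n+1$). Since each unit in these costs at least $1$ in $D$, their total is at most $D$. Padding the total up to $D$ injects these tuples into weak compositions of $D$ into $2n+1$ parts, which bounds the number of $\delta$ with $D(\delta)=D$ by
$$\tbinom{D+2n}{2n}\le(2n+1)^D .$$
This is one part more than in Proposition~\ref{c-delta-prop}, because all $2n$ remaining increments may be nonzero.

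Combining this with $c_{\delta,j}\le(2n)^j$ from Proposition~\ref{c-delta-prop} gives
$$|c_m|\le\sum_{D+2j=d}(2n+1)^{D}(2n)^j\le\sum_{j\ge 0}(2n+1)^{d-j}=(2n+1)^d\,\frac{2n+1}{2n}.$$
We want this to be at most $(4n)^d$, i.e. $\frac{2n+1}{2n}\le\bigl(\frac{4n}{2n+1}\bigr)^d$.
\begin{itemize}
\item For $n\ge2$ this holds already at $d=1$, since it reduces to $(2n+1)^2\le 8n^2$, i.e. $4n^2-4n-1\ge0$; for larger $d$ it only improves.
\item For $n=1$ it needs $(4/3)^d\ge 3/2$, i.e. $d\ge2$. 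The case $d=1$ can be checked by hand: $m=rn^2-1$ is odd, while $m=2E-2j$ with $2E(\delta)=\sum_k e_k(2n-k+1)(k-1)$. For $n=1$ this is $2e_2$, which is even, so $m$ is even and $c_{r-1}=0$.
\end{itemize}
If these constants turned out not to close in some case, I would sharpen the count using the weights $(k-n-1)^2\ge4$ for $|k-n-1|\ge2$, or the weight $n^2$ on $e_0$.
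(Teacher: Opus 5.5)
Your Step 1 and the general estimate in Step 2 are correct. The gap is the patch for $n=1$, $d=1$: the parity argument is false on both counts. For $n=1$ the weight $(2n-k+1)(k-1)$ at $k=2$ equals $1$, not $2$, so $2E(\delta)=e_2=\delta_2-\delta_1$, which can be odd. Also $rn^2-1=r-1$ is even whenever $r$ is odd. In fact $c_{r-1}\neq 0$ in general. Here $D(\delta)=e_0+e_1$, so $D=1$ is attained by $\delta=(0,r-1)$ and $\delta=(1,r)$, both with $j=0$ and $c_{\delta,0}=1$. This gives $c_{r-1}=a+a^{-1}$. For $r=1$ it recovers $\tilde{\lambda}_F(p)=p^{1/2}+p^{-1/2}+a+a^{-1}$, the Saito--Kurokawa eigenvalue. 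So that step, as written, asserts something false.

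The inequality in (3) still holds, and your own count gives it once you keep the $j$-sum in its true range. $D+2j=d$ forces $0\le j\le d/2$, so for $d=1$ only $j=0$ contributes, and $|c_m|\le\binom{2n+1}{2n}=2n+1\le 4n$.

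A cleaner, uniform fix is the fallback you mention: count with the weighted quantities. Set $y_0=n^2e_0$ and $y_k=(k-n-1)^2e_k$ for $k\ne n+1$. These form a weak composition of $D$ into exactly $2n$ parts and, for fixed $r$, determine $\delta$. Hence $\#\{\delta: D(\delta)=D\}\le\binom{D+2n-1}{2n-1}\le(2n)^D$. Then $|c_m|\le\sum_j(2n)^{d-2j}(2n)^j\le 2(2n)^d$, so $|c_m|^{1/d}\le 2^{1/d}\cdot 2n\le 4n$ with no case split.

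This is essentially the paper's count. The paper gets the same $2n$-part composition directly from
$rn^2-2E(\delta)=\sum_{i\le n}(2n+1-2i)\delta_i+\sum_{i>n}(2i-2n-1)(r-\delta_i)$.

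Your summation-by-parts identity $D(\delta)=e_0n^2+\sum_{k\ne n+1}e_k(k-n-1)^2$ is a genuine improvement over the paper in two ways. First, it shows $E(\delta)\ge 0$ on admissible (nondecreasing) $\delta$, so $c_m=0$ already for $m<-2n(2n-1)$. The paper's lower bound comes from $(r,\dots,r,0,\dots,0)$, which is not nondecreasing, though the weaker range it states remains true. Second, it makes the uniqueness of the maximizer transparent.
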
 
\begin{proof}
    Set $E(\delta) := \sum_{i=1}^{2n} (-n-\frac{1}{2}+i)\delta_i$.  From Theorem \ref{eigenvalue-ikeda} and Proposition \ref{c-delta-prop}, we have $\tilde{\lambda}_F(p^r) = \sum_{m \in Z} c_mp^{\frac{m}{2}}$ with the coefficients $\ds{c_m = \sum_{j \geq 0} \sum_{\delta:E(\delta)-j = \frac{m}{2}}a^{-nr+\sum \delta_i} c_{\delta,j} }$. Since $\Phi(\delta)$ is a polynomial in $p^{-1}$ with non-zero constant term, to find the maximum power of $p$, it suffices to maximize $E(\delta)$. The maximum value of $E(\delta)$ occurs for $\hat\delta = ( \underbrace{0,\dots,0}_n,\underbrace{r,\dots,r}_n)$ and the maximum value is
\[
\sum_{i=n+1}^{2n}(-n-\frac{1}{2}+i) r = \sum_{i=1}^n ( i -\frac{1}{2}) r = \frac{rn^2}{2}.
\] The constant term of $\Phi(\hat\delta)$ is $1$ and the power of $a$ for $\hat\delta$ is $0$. Hence $c_{rn^2}=1$ and $c_m = 0$ for $m > rn^2$. On the other hand, the minimum value of $E(\delta)$ is obtained for $\hat{\hat{\delta}} = ( \underbrace{r,\dots,r}_n,\underbrace{0,\dots,0}_n)$ and the minimum value is $-rn^2/2$. Since $\Phi(\delta)$ is a polynomial in $p^{-1}$ with maximum possible degree $n(2n-1)$, we see that $c_m = 0$ for $m < -rn^2-2n(2n-1)$, as required.

Now let $m$ be such that $-rn^2-2n(2n-1) < m < rn^2$. Since $c_{\delta,j}$ are positive integers, $a$ has absolute value $1$ and using (\ref{eq1}), we see that 
\begin{equation}\label{eqn11}
 \lvert c_m \rvert \leq \sum_{\substack{j \geq 0 \\ \delta : E(\delta) - j = \frac{m}{2}}} c_{\delta,j} = \sum\limits_{j=0}^{n(2n-1)} \sum\limits_{\delta : E(\delta)-j=m/2} c_{\delta,j} \leq \sum_{j=0}^{n(2n-1)} \# \{\delta : E(\delta)- j = \frac{m}{2}\} (2n)^j.
\end{equation}
Note that, for a fixed $m$, we have $\# \{\delta : E(\delta)- j = \frac{m}{2}\} = 0$ if $j > (rn^2-m)/2$. 
 Observe that \[
    rn^2 - 2E(\delta) = \sum_{i=1}^n (2n+1-2i)\delta_i + \sum_{i=n+1}^{2n} (2i-2n-1)(r-\delta_i). 
    \] Let $(2n+1-2i)\delta_i = x_i$ for $1 \leq i \leq n$ and $(2i-2n-1)(r - \delta_i) = x_{i}$ for $n+1 \leq i \leq 2n.$
    Hence, for every $\delta$ such that $E(\delta) - j = \frac{m}{2}$ there exists a tuple $(x_1,\dots ,x_{2n})$ such that $\sum_{i=1}^{2n} x_i =  rn^2-2E(\delta)=rn^2-m-2j$, i.e. we get a weak composition of $rn^2-m-2j$ into $2n$ parts. The number of such tuples is bounded by $\ds{\binom{rn^2-m-2j+2n-1}{2n-1}}.$ Hence, 
    \begin{equation}\label{eq2} 
        \# \{ \delta : E(\delta)-j  = \frac{m}{2} \} \leq  \ds{\binom{rn^2-m-2j+2n-1}{2n-1}} \leq (2n)^{rn^2-m-2j}.
    \end{equation}
  Substituting (\ref{eq2}) in (\ref{eqn11}), we obtain 
   \[
   \lvert c_m \rvert \leq  \sum_{j=0}^{n(2n-1)} (2n)^{rn^2-m-2j} (2n)^j  =\sum_{j=0}^{n(2n-1)} (2n)^{rn^2-m-j} = (2n)^{rn^2-m}\sum_{j=0}^{n(2n-1)} (2n)^{-j} \leq 2 (2n)^{rn^2-m}.
   \] This implies that 
   \[
   \lvert c_m \rvert^{\frac{1}{rn^2-m}} \leq 2^{\frac{1}{rn^2-m}} 2n. 
   \]Since $m < rn^2$, we conclude that 
   \[
   \lvert c_m \rvert^{\frac{1}{rn^2-m}} \leq 2(2n) = 4n,
   \]
   as required.
\end{proof}
\begin{thm}\label{positivity}
    Let $F \in S_{k+n}(\Gamma_{2n})$ be the Ikeda lift of a Hecke eigenform $f \in S_k(\Gamma_1).$ Let $\lambda_F(p^r)$ be the eigenvalue of $F$ for the Hecke operator $T(p^r)$.
  Fix $r \geq 1$. We have $\lambda_F(p^r)>0$ for all $p > \Big(4n\big(2rn^2+2n(2n-1)\big)\Big)^2$.
\end{thm}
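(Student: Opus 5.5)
Since $p^{r(nk-\frac{n}{2})}>0$, it suffices to show $\tilde{\lambda}_F(p^r)>0$. By Proposition \ref{lamda_F(p^r)}, $\tilde{\lambda}_F(p^r)$ is a finite Laurent polynomial in $p^{1/2}$ whose leading term is $p^{rn^2/2}$ (since $c_{rn^2}=1$). The other coefficients $c_m$ vanish unless $-rn^2-2n(2n-1)\le m<rn^2$, and they satisfy $|c_m|\le (4n)^{rn^2-m}$. Note also that $\tilde\lambda_F(p^r)$ is real, because the Hecke eigenvalues are real. I will therefore bound the real part from below by the leading term minus the absolute values of the remaining terms.

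Put $q=p^{1/2}$ and $N=rn^2$. Reindexing by $\ell=N-m\ge 1$ gives
\[
\tilde{\lambda}_F(p^r)\ \ge\ q^{N}-\sum_{\ell=1}^{L}|c_{N-\ell}|\,q^{N-\ell}\ \ge\ q^N\Big(1-\sum_{\ell=1}^{L}\big(\tfrac{4n}{q}\big)^{\ell}\Big),
\]
where $L=2rn^2+2n(2n-1)$ is the number of possible nonzero lower coefficients. The key point is that the exponent in the bound $|c_m|\le(4n)^{rn^2-m}$ from Proposition \ref{lamda_F(p^r)} matches the drop in the power of $q$.

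It remains to show that $\sum_{\ell=1}^{L}(4n/q)^{\ell}<1$. The hypothesis $p>\big(4nL\big)^2$ gives $q>4nL$, so $x:=4n/q<1/L$. A crude estimate suffices here:
\[
\sum_{\ell=1}^{L}x^\ell\le Lx<1.
\]
Alternatively one can use the geometric bound $x/(1-x)$, which is also less than $1$ once $x<1/2$; the stated threshold is clearly tailored to the estimate $Lx<1$.

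The main obstacle is just bookkeeping. First, $|c_m|$ bounds the modulus even though $a$ is complex with $|a|=1$, so the lower bound above should be applied to the real part. Second, the range of $m$ must have at most $L$ values below the top exponent. Even counting the endpoint $m=-rn^2-2n(2n-1)$, we get $\ell\le 2rn^2+2n(2n-1)=L$, so the sum has at most $L$ terms. Hence $\tilde\lambda_F(p^r)\ge q^N(1-Lx)>0$, and $\lambda_F(p^r)>0$.
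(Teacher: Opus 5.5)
Your proof is correct and is essentially the paper's argument: both use $|c_m|\le (4n)^{rn^2-m}$ to show that each of the at most $L=2rn^2+2n(2n-1)$ lower-order terms is smaller than $1/L$ times the leading term $p^{rn^2/2}$. Both then use that $\lambda_F(p^r)$ is real to conclude $|T|<1$; your estimate $x^\ell\le x<1/L$ is just a cleaner packaging of the paper's chain of inequalities.

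Your side remark is also correct. Summing the geometric series instead shows that the same bound on the $c_m$ already gives $\lambda_F(p^r)>0$ for all $p>64n^2$, uniformly in $r$, which improves the stated threshold.
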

\begin{proof} 
By Proposition \ref{lamda_F(p^r)}, part iii), and the assumption that $p^{1/2} > 4n\big(2rn^2+2n(2n-1)\big)$, we have, for $m < rn^2$,
\begin{align*}
\lvert c_m \rvert^{\frac{1}{rn^2-m}} & \leq 4n \Big(\frac{2rn^2+2n(2n-1)}{2rn^2+2n(2n-1)}\Big)^{\frac{1}{rn^2-m}} \\
&\leq 4n\big(2rn^2+2n(2n-1)\big)\Big(\frac{1}{2rn^2+2n(2n-1)}\Big)^{\frac{1}{rn^2-m}} \\
&< p^{\frac 12} \Big(\frac{1}{2rn^2+2n(2n-1)}\Big)^{\frac{1}{rn^2-m}}.
\end{align*}
This gives us the inequality
\begin{equation}\label{eqn12}
\frac{|c_m|}{p^{\frac{rn^2-m}2}} < \frac 1{2rn^2+2n(2n-1)}.
\end{equation}
Hence, we have
$$\frac{\lambda_F(p^r)}{p^{r(nk-\frac{n}{2})}} = p^{\frac{rn^2}2} +  \sum\limits_{m=-rn^2-2n(2n-1)}^{rn^2-1}c_m p^{\frac m2} = p^{\frac{rn^2}2}\Big(1+\sum\limits_{m=-rn^2-2n(2n-1)}^{rn^2-1}c_m p^{\frac{m-rn^2}2}\Big).
$$
Set $T := \sum\limits_{m=-rn^2-2n(2n-1)}^{rn^2-1}c_m p^{\frac{m-rn^2}2}$. Since $\lambda_F(p^r)$ is real, we know that $T$ is a real number. From (\ref{eqn12}) it follows that $|T|<1$. Hence, we get $\lambda_F(p^r) > 0$, as required.    
\end{proof}

\end{document}